\newtheorem{theorem}{Theorem}
\newtheorem{lemma}{Lemma}
\newtheorem{corollary}{Corollary}
\theoremstyle{remark}
\newtheorem{remark}{Remark}
\theoremstyle{definition}
\newcommand{\CC}{\mathbb C}
\newcommand{\FF}{\mathbb F}
\newcommand{\HH}{\mathbb H}
\newcommand{\RR}{\mathbb R}
\newcommand{\GG}{\mathbb G}
\renewcommand{\ll}{\langle}
\newcommand{\rr}{\rangle}
\newcommand{\T}{\top}
\newcommand{\co}{\complement}
\newcommand{\as}{\Asterisk}
\newcommand{\ct}{\dagger}
\DeclareMathOperator{\im}{Im}
\DeclareMathOperator{\Ker}{Ker}
\DeclareMathOperator{\rank}{rank}
\newcommand{\matt}[1]{\left[\begin{smallmatrix}
   #1\end{smallmatrix}\right]}
\newcommand{\mat}[1]{\begin{bmatrix}
   #1\end{bmatrix}}
\begin{document}
\title{Generalization of Roth's solvability criteria to systems of matrix equations\thanks{Linear Algebra Appl. 527 (2017) 294--302.}}

\author{{\it Andrii Dmytryshyn}, Department of Computing Science,\\ Ume{\aa} University, Ume{\aa}, Sweden; andrii@cs.umu.se
   \and
{\it Vyacheslav Futorny}, Department of Mathematics,\\ University of S\~ao Paulo, Brazil; futorny@ime.usp.br
   \and
{\it Tetiana Klymchuk}, Universitat Polit\`{e}cnica de Catalunya, \\ Barcelona, Spain; Taras Shevchenko National University,\\ Kiev, Ukraine; tetiana.klymchuk@upc.edu
  \and
{\it Vladimir V. Sergeichuk},
Institute of Mathematics,\\
Kiev, Ukraine, sergeich@imath.kiev.ua}
\date{}

\maketitle

\begin{abstract}
W.E.~Roth (1952) proved that the matrix equation $AX-XB=C$ has a solution if and only if the matrices $\matt{A&C\\0&B}$ and $\matt{A &0\\0 & B}$ are similar. A.~Dmytryshyn and B.~K{\aa}gstr{\"o}m (2015) extended Roth's criterion to systems of matrix equations
$A_iX_{i'}M_i-N_iX_{i''}^{\sigma_i} B_i=C_i$ $(i=1,\dots,s)$ with unknown matrices $X_1,\dots,X_t$, in which every $X^{\sigma}$ is $X$, $X^{\T}$, or  $X^*$.
We extend their criterion to systems of complex matrix equations that include the complex conjugation of unknown matrices. We also prove an analogous criterion for systems of quaternion matrix equations.

{\it AMS classification:} 15A24

{\it Keywords:} Systems of matrix equations,  Sylvester equations, Roth's criteria

\end{abstract}

\section{Introduction}

Roth \cite{roth}
proved that the matrix equation $AX-XB=C$ (respectively, $AX-YB=C$)
over a field has a solution if and only if the matrices
$\matt{A & C \\0 & B}$ and $\matt{A & 0 \\0 & B}$ are similar (respectively, equivalent); see also \cite[Section 4.4.22]{hor} and \cite[Section 12.5]{lan}.

Dmytryshyn and K{\aa}gstr{\"o}m \cite[Theorem 6.1]{dmy} extended Roth's criteria to the system of generalized Sylvester equations
\[A_iX_{i'}M_i-N_iX_{i''}^{\sigma_i} B_i=C_i,\qquad
i=1,\dots,s\]
with unknown matrices  $X_1,\dots,X_t$ over a field of characteristic not 2 with a fixed involution, in which every $X_{i''}^{\sigma_i}$ is either $X_{i''}$, or $X_{i''}^{\T}$, or  $X^*_{i''}$. Most of the known generalizations of Roth's criteria are special cases of their criterion.  The first author was awarded the SIAM Student Paper Prize 2015 for the paper \cite{dmy}.

However, Dmytryshyn and K{\aa}gstr{\"o}m \cite{dmy} do not consider complex matrix equations that include the complex conjugate of unknown matrices.
The theory of such equations and their applications to discrete-time antilinear systems are presented in Wu and Zhang's new book \cite{Wu}. Bevis, Hall, and Hartwig \cite{bev} proved that the complex matrix equation $A\bar X-XB=C$ has a solution if and only if the matrices $\matt{A & C \\0 & B}$ and $\matt{A & 0 \\0 & B}$  are consimilar (i.e., $\bar S^{-1}\matt{A & C \\0 & B}S= \matt{A & 0 \\0 & B}$ for some nonsingular $S$).

We extend Dmytryshyn and K{\aa}gstr{\"o}m's criterion to a large class of matrix equations that includes the systems
\begin{equation}\label{vwo}
A_iX_{i'}^{\varepsilon_i}M_i-N_i
X_{i''}^{\delta_i}B_i=C_i,\qquad i',i''\in\{1,\dots,t\},\ \ i=1,\dots,s
\end{equation}
\begin{itemize}
  \item of complex matrix equations, in which $\varepsilon_i,\delta_i\in\{1,\co,\T,\as\}$, where $X^{\co}:=\bar X$ is the complex conjugate matrix and $X^\as:=\bar X^\T$ is the complex adjoint matrix, and
  \item of quaternion matrix equations, in which $\varepsilon_i,\delta_i\in\{1,\as\}$, where  $X^\as$ is the quaternion adjoint matrix.
\end{itemize}

We prove our criterion by methods of \cite{dmy} (see also \cite{fla,wimm,wim}), though our exposition is self-contained and uses only elementary linear algebra.

Note that
the system of matrix equations \eqref{vwo} over a field can be rewritten as a system $Mx=b$ of linear equations, which gives another criterion of solvability for \eqref{vwo}: it has a solution if and only if $\rank M=\rank[M|b]$. However, the system $Mx=b$ is large and can be ill-conditioned.

Special cases of the system \eqref{vwo} are considered in hundreds of articles and books. For recent results related to solvability criteria we refer the reader to \cite{teran,dm,dmy,dua,fut,sim,Wu} and the references given there.
A survey of papers on Roth's criteria and their generalizations is given in the extended introduction to \cite{fut}. A quaternion linear algebra is presented in \cite{rod}, in which quaternion matrix equations are considered in Chapters 5 and 14.

\section{Main results}

Let $\FF$ be a skew field (which can be a field).
An \emph{involutory automorphism} of $\FF$ is a bijection $a\mapsto a^{\co}$ of $\FF$ onto itself, satisfying
\begin{equation*}\label{ggk}
(a+b)^{\co}= a^{\co}+b^{\co},\quad
(ab)^{\co}=a^{\co}b^{\co},\quad
(a^{\co})^{\co}=a \qquad\text{for all $a\in\FF$.}
\end{equation*}
An \emph{involutory anti-automorphism} of $\FF$ is a bijection $a\mapsto a^{\circ}$, satisfying
\begin{equation*}\label{ggkw}
(a+b)^{\circ}= a^{\circ}+b^{\circ},\quad
(ab)^{\circ}=b^{\circ}a^{\circ},\quad
(a^{\circ})^{\circ}=a \qquad\text{for all $a\in\FF$.}
\end{equation*}
For example, the complex conjugation is an involutory automorphism and involutory anti-automorphism of $\CC$; the quaternion conjugation is an involutory anti-automorphism of $\HH$.

The following theorem is proved in Section \ref{ss3}.
\begin{theorem}\label{tss}
Given
\begin{itemize}
  \item a skew field $\FF$ of  characteristic not 2 that is finite dimensional over its center,
  \item an involutory automorphism $a\mapsto a^\co$ $($possible, the identity$)$ and an involutory anti-automorphism $a\mapsto a^\circ$ of\/ $\FF$ $($possible, the identity if $\FF$ is a field$)$,
  \item a system
\begin{equation}\label{vwh}
A_iX_{i'}^{\varepsilon_i}-
X_{i''}^{\delta_i}B_i=C_i,\qquad i=1,\dots,s
\end{equation}
 of matrix equations over $\FF$ with unknown matrices $X_1,\dots,X_t$,
in which all $i',i''\in\{1,\dots,t\}$, $\varepsilon_i,\delta_i\in\{1,\co,\ct,\as\}$, and
\[
A^{\ct}:=(A^{\circ})^{\T},\qquad A^{\as}:=((A^{\co})^{\circ})^{\T}\] for each matrix $A$ over $\FF$;
\end{itemize}
the system \eqref{vwh}
has a solution if and only if there exist nonsingular matrices $P_1,\dots,P_t$ over $\FF$ such that
\begin{equation}\label{qq1}
\mat{A_i&0\\0&B_i}P_{i'}^{\ll\varepsilon_i\rr}
=P_{i''}^{\ll\delta_i\rr}\mat{A_i&C_i\\0&B_i},\qquad i=1,\dots,s,
\end{equation}
in which
\begin{equation}\label{qq4}
P^{\ll\sigma  \rr}:=
\begin{cases}P^{\sigma }&\text{if }\sigma   \in\{1,\co\},\\
J(P^{\sigma })^{-1}J^{-1}&\text{if }\sigma \in\{\ct,\as\},
\end{cases}\qquad J:=\mat{0&I\\-I&0}.
\end{equation}
\end{theorem}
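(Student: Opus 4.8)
The plan is to prove the two implications separately: the ``only if'' direction by an explicit construction, and the ``if'' direction by reducing the solvability of \eqref{vwh} to the splitting of a canonical extension.

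\emph{Necessity.} Suppose \eqref{vwh} has a solution $X_1,\dots,X_t$. I would put $P_j:=\mat{I&X_j\\0&I}$ for every $j$ and verify \eqref{qq1} directly. The crux is the uniform identity
\[
P_j^{\ll\sigma\rr}=\mat{I&X_j^{\sigma}\\0&I},\qquad \sigma\in\{1,\co,\ct,\as\}.
\]
For $\sigma\in\{1,\co\}$ this is immediate, since $I^{\co}=I$ and $\co$ is applied entrywise. For $\sigma\in\{\ct,\as\}$ the operation $(\cdot)^{\sigma}$ involves a transpose, so $P_j^{\sigma}$ becomes the block \emph{lower} unitriangular matrix $\mat{I&0\\X_j^{\sigma}&I}$; the definition \eqref{qq4} is rigged precisely so that the subsequent conjugation $J(\cdot)^{-1}J^{-1}$ restores the block upper unitriangular shape with $X_j^{\sigma}$ in the corner. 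Substituting these expressions into \eqref{qq1} and multiplying out reduces the $i$-th identity to $A_iX_{i'}^{\varepsilon_i}-X_{i''}^{\delta_i}B_i=C_i$, i.e.\ to \eqref{vwh}, so these $P_j$ work.

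\emph{Sufficiency (the hard direction).} Conversely, given nonsingular $P_1,\dots,P_t$ satisfying \eqref{qq1}, I would read \eqref{qq1} as an isomorphism, via the twisted transformations $P^{\ll\sigma\rr}$, between the split family $\bigl(\mat{A_i&0\\0&B_i}\bigr)_i$ and the family $\bigl(\mat{A_i&C_i\\0&B_i}\bigr)_i$. Each family carries the same ``diagonal'' data $(A_i)_i$ as a subobject and $(B_i)_i$ as a quotient, so the second family sits in a canonical extension of $(B_i)_i$ by $(A_i)_i$. Running the necessity computation backwards shows that \eqref{vwh} is solvable if and only if this extension splits, i.e.\ if and only if the $P_j$ can be replaced by block upper unitriangular ones; the $(1,2)$-blocks of the normalized $P_j$ are then the sought $X_j$. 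Thus the whole problem reduces to upgrading the given \emph{abstract} isomorphism to one respecting the canonical subobject and quotient. The engine for this is a finiteness/cancellation count: because $\FF$ is finite dimensional over its center, the spaces of admissible transformations are finite dimensional over the center, which is what makes a unique-decomposition (Krull--Schmidt) argument available, and, just as in \cite{dmy}, the hypothesis $\operatorname{char}\FF\neq2$ should enter to separate the symmetric and skew-symmetric parts of the forms produced by the anti-automorphism $\circ$.

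The step I expect to be the main obstacle is exactly this upgrade. The difficulty is that for $\sigma\in\{\ct,\as\}$ the transformation $P\mapsto J(P^{\sigma})^{-1}J^{-1}$ is \emph{not} additive in $P$, so the ``morphisms'' entering \eqref{qq1} do not form a linear space and the naive splitting count cannot be applied verbatim. The real work, carried out uniformly over all $s$ equations sharing the $t$ unknowns, is to linearize this twisted (involution-twisted) isomorphism problem—using the $J$-conjugation device of \eqref{qq4} to convert the contragredient/adjoint actions into genuine block transformations—so that the cancellation machinery applies and forces the canonical extension to split. Once that linearization is in place, the splitting is produced and the entries $X_j$ are read off, completing the proof.
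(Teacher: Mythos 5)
Your necessity argument is exactly the paper's: take $P_j=\matt{I&X_j\\0&I}$ and check that $P_j^{\ll\sigma\rr}=\matt{I&X_j^{\sigma}\\0&I}$ for all four values of $\sigma$, which is precisely what the definition \eqref{qq4} is designed to achieve. That half is fine.

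The sufficiency half, however, is a plan rather than a proof, and the two steps you yourself flag as ``the main obstacle'' and ``the real work'' are exactly the ones the paper has to supply and that your text does not. First, the engine is not a Krull--Schmidt/cancellation argument on a canonical extension: the paper proves a lemma (Lemma \ref{lem}) covering the case where all exponents lie in $\{1,\co\}$ by a Flanders--Wimmer--style rank--nullity count. One introduces the $\GG$-vector spaces $\mathcal U_1$, $\mathcal U_2$ of tuples intertwining $\matt{A_i&0\\0&B_i}$ with $\matt{A_i&C_i\\0&B_i}$, resp.\ with $\matt{A_i&0\\0&B_i}$, over the fixed subfield $\GG$ of the center (this is where finite dimensionality over the center is used, to make these finite dimensional); the map $U\mapsto UP$ gives $\dim\mathcal U_1=\dim\mathcal U_2$, the kernels of the corner projections coincide, and comparing images forces a block upper unitriangular element of $\mathcal U_1$, whose $(1,2)$ blocks solve the system. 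Nothing in your sketch produces this count. Second, and more seriously, you correctly observe that $P\mapsto J(P^{\sigma})^{-1}J^{-1}$ is not additive, so the $\ct$- and $\as$-cases cannot be fed into a linear-space argument directly --- but you leave the resolution unstated. The paper's resolution is concrete: factor each exponent as $\varepsilon_i=\alpha_i\lambda_i$ with $\alpha_i\in\{1,\co\}$, $\lambda_i\in\{1,\ct\}$, apply $\ct$ to \eqref{qq1} and conjugate by $J$ to obtain a second, mirrored family of equalities \eqref{etc}, so that the combined $2s$ conditions involve only $\{1,\co\}$-type actions on a doubled set of unknowns $Y_{1,j},Y_{\ct,j}$; Lemma \ref{lem} then yields a solution of the doubled system, and averaging, $\underline X_j:=(\underline Y_{1,j}+\underline Y_{\ct,j}^{\ct})/2$, recovers a solution of \eqref{vwh}. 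This averaging is where $\operatorname{char}\FF\neq2$ actually enters --- not, as you guess, through a symmetric/skew-symmetric decomposition of forms. Without the dimension-count lemma and the doubling-plus-averaging reduction, the sufficiency direction is not established.
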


If all $\varepsilon_i,\delta_i\in\{1,\co\}$ in \eqref{vwh}, then the condition ``$\FF$ of characteristic not 2'' in Theorem~\ref{tss} can be omitted; see Lemma \ref{lem}.

The conditions \eqref{qq1}  on the block matrices from Theorem \ref{tss} are all given in the same style using  \eqref{qq4}. In the following remark, we give these conditions more explicitly for each of four possible cases.

\begin{remark}\label{nh9}
For each $i=1,\dots,s$, the equality \eqref{qq1} in Theorem~\ref{tss}  can be rewritten in the form:
\begin{align*}
\matt{A_i&0\\0&B_i}P_{i'}^{\varepsilon_i}
&=P_{i''}^{\delta_i}
\matt{A_i&C_i\\0&B_i}
&&\text{if } \varepsilon_i,\delta_i
\in\{1,\co\},
                      \\
P_{i''}^{\delta_i}
\matt{0&-B_i\\A_i&0}P_{i'}^{\varepsilon_i}
&=\matt{0&-B_i\\A_i&C_i}
&&\text{if }\varepsilon_i\in\{1,\co\},\
\delta_i \in\{\ct,\as\},
                      \\
\matt{0&-A_i\\B_i&0}
&=P_{i''}^{\delta_i}
\matt{C_i&-A_i\\B_i&0}
P_{i'}^{\varepsilon_i}
&&\text{if }\varepsilon_i\in\{\ct,\as\},\
\delta_i \in\{1,\co\},
                      \\
P_{i''}^{\delta_i}
\matt{B_i&0\\0&A_i}
&=\matt{B_i&0\\-C_i&A_i}
P_{i'}^{\varepsilon_i}
&&\text{if }\varepsilon_i,\delta_i
\in\{\ct,\as\}.
\end{align*}
\end{remark}

\begin{corollary}\begin{itemize}
                                  \item[\rm(a)]
Over $\RR$, the system \eqref{vwh} with $\varepsilon_i,
      \delta_i\in\{1,\T\}$ has a solution if and only if \eqref{qq1} holds for some nonsingular real matrices $P_1,\dots,P_t$, and $\T$ is used instead of $\ct$ in \eqref{qq4}.

    \item[\rm(b)]
Over $\CC$, the system \eqref{vwh} with $\varepsilon_i,
      \delta_i\in\{1,\co,\T,\as\}$ has a solution if and only if \eqref{qq1} holds for some nonsingular complex matrices $P_1,\dots,P_t$. Here
$A^{\co}:=\bar A$ is the complex conjugate matrix,
      $A^{\as}:=\bar A^{\T}$ is the complex adjoint matrix. The symbol\/ $\T$ is used instead of $\ct$ in \eqref{qq4}.

                   \item[\rm(c)]
Over $\HH$, the system \eqref{vwh} with $\varepsilon_i,
      \delta_i\in\{1,\co,\ct,\as\}$ has a solution if and only if \eqref{qq1} holds for some nonsingular quaternion matrices $P_1,\dots,P_t$. Here
\[
h^{\co}:=a+bi-cj-dk,\quad h^{\circ}:= a-bi+cj+dk,\quad \bar h= (h^{\co})^{\circ}=a-bi-cj-dk
\]
for each quaternion  $h=a+bi+cj+dk$, and
\[A^{\ct}=(A^{\circ})^{\T},\qquad A^{\as}=\bar A^{\T}
\]
for each quaternion matrix $A$.
                 \end{itemize}
\end{corollary}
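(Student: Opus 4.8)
The plan is to derive each of the three parts as a direct specialization of Theorem~\ref{tss}, since $\RR$, $\CC$, and $\HH$ are all skew fields of characteristic $0\neq 2$ that are finite dimensional over their centers ($\RR$ and $\CC$ are one dimensional over themselves, and $\HH$ is four dimensional over its center $\RR$). For each part it then remains only to exhibit an involutory automorphism $a\mapsto a^\co$ and an involutory anti-automorphism $a\mapsto a^\circ$ for which the derived operations $A^\ct=(A^\circ)^\T$ and $A^\as=((A^\co)^\circ)^\T$ coincide with the operations named in the statement, and to observe that the admissible set $\{1,\co,\ct,\as\}$ then reduces to the set listed in that part.

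For part (a) I would take both $\co$ and $\circ$ to be the identity map of $\RR$; since $\RR$ is commutative and admits no nontrivial field automorphism, the identity is trivially an involutory automorphism and, by the parenthetical option in Theorem~\ref{tss}, an involutory anti-automorphism. Then $A^\co=A$ and $A^\ct=(A^\circ)^\T=A^\T=A^\as$, so the four symbols $\{1,\co,\ct,\as\}$ collapse to $\{1,\T\}$ and \eqref{qq1}--\eqref{qq4} reduce to the stated criterion with $\T$ in place of $\ct$. For part (b) I would take $\co$ to be complex conjugation (an involutory automorphism of $\CC$) and $\circ$ to be the identity (an involutory anti-automorphism, as $\CC$ is a field). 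Then $A^\ct=(A^\circ)^\T=A^\T$ and $A^\as=((A^\co)^\circ)^\T=\bar A^\T$, exactly the transpose and the complex adjoint claimed, and Theorem~\ref{tss} yields (b).

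The only genuine verification occurs in part (c). Here I would set $h^\co:=a+bi-cj-dk$ and $h^\circ:=a-bi+cj+dk$ for $h=a+bi+cj+dk$ and check three facts: that $\co$ is an involutory automorphism, that $\circ$ is an involutory anti-automorphism, and that $(h^\co)^\circ=\bar h$ recovers the standard quaternion conjugation. The first is immediate once one observes that $h^\co=ihi^{-1}$, so $\co$ is the inner automorphism given by conjugation with $i$ (multiplicative by construction, and involutory since $i^2=-1$ is central). For $\circ$, the cleanest route is to note that the standard conjugation $h\mapsto\bar h$ is a known involutory anti-automorphism of $\HH$ and that $h^\circ=\overline{h^\co}$; because composing the automorphism $\co$ with an anti-automorphism again yields an anti-automorphism, $\circ$ is an anti-automorphism, and a short computation confirms $(h^\circ)^\circ=h$. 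The identity $(h^\co)^\circ=\overline{(h^\co)^\co}=\bar h$ then follows from $\co^2=\mathrm{id}$.

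With the hypotheses in place, part (c) follows from Theorem~\ref{tss}: the derived operations are $A^\ct=(A^\circ)^\T$ and $A^\as=((A^\co)^\circ)^\T=\bar A^\T$, the quaternion adjoint, so the four symbols $\{1,\co,\ct,\as\}$ are exactly those listed and \eqref{qq1} is the asserted criterion over $\HH$. I expect no real obstacle: the whole corollary is an exercise in matching involutions, and the single point requiring care is confirming that the displayed quaternion maps have the correct automorphism and anti-automorphism type, which the factorizations through conjugation by $i$ and through standard conjugation settle cleanly.
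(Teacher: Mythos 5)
Your proposal is correct and matches the paper's intent: the corollary is stated without proof precisely because it is the direct specialization of Theorem~\ref{tss} that you carry out, and your verifications (identity maps for $\RR$, conjugation plus identity for $\CC$, and the factorizations $h^{\co}=ihi^{-1}$ and $h^{\circ}=\overline{h^{\co}}$ for $\HH$) are exactly the checks needed. The paper's remark following the corollary, citing \cite{kli1} and \cite{rod} for the classification of involutory (anti-)automorphisms of $\HH$, confirms that the quaternion maps you verify are the intended ones.
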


Note that each involutory automorphism of $\HH$ is either the identity, or
$h \mapsto a+bi-cj-dk$ in a suitable set of orthogonal
imaginary units $i,j,k\in\HH$, see
\cite[Lemma 1]{kli1}; and
each involutory anti-automorphism of $\HH$ is either $
h\mapsto a-bi+cj+dk$, or $
h \mapsto a-bi-cj-dk$ in a suitable set of orthogonal
imaginary units, see
\cite[Theorem 2.4.4(c)]{rod}.

\begin{theorem}\label{tll}
Let $\FF$ be a skew field of  characteristic not $2$ that is finite dimensional over its center.
The system \eqref{vwo} over $\FF$, in which all $\varepsilon_i$ and $\delta_i$ are as in Theorem~\ref{tss}, has a solution if and only if there exist nonsingular matrices $P_1,\dots,P_t,$\! $Q_1,\dots,Q_s,$\! $R_1,\dots,R_s$ over $\FF$ satisfying the following $3s$ equalities:
\begin{equation}\label{mnc}
\left.\begin{split}
\matt{A_i&0\\0&B_i}Q_i&=R_i\matt{A_i&C_i\\0&B_i}
                         \\
\matt{I&0\\0&M_i}Q_i&=
P_{i'}^{\ll\varepsilon_i\rr}\matt{I&0\\0&M_i}
                           \\
\matt{N_i&0\\0&I}P_{i''}^{\ll\delta_i\rr}
&=R_i\matt{N_i&0\\0&I}
 \end{split}\right\},\qquad i=1,\dots,s.
\end{equation}

\end{theorem}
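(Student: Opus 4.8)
The plan is to deduce Theorem~\ref{tll} from Theorem~\ref{tss} by replacing the system \eqref{vwo}, whose unknowns carry coefficient matrices on \emph{both} sides, by an enlarged system of the type \eqref{vwh}, in which every unknown carries a coefficient on at most one side. For each $i=1,\dots,s$ I would introduce two new unknown matrices $Y_i$ and $Z_i$ and replace the $i$th equation of \eqref{vwo} by the three equations
\[
A_iY_i-Z_iB_i=C_i,\qquad
Y_i-X_{i'}^{\varepsilon_i}M_i=0,\qquad
N_iX_{i''}^{\delta_i}-Z_i=0 .
\]
In the first equation $A_i$ stands to the left of $Y_i$ and $B_i$ to the right of $Z_i$; in the second $M_i$ stands only to the right of $X_{i'}$; in the third $N_i$ stands only to the left of $X_{i''}$. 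Hence the resulting system $\mathcal S$ in the unknowns $X_1,\dots,X_t,Y_1,\dots,Y_s,Z_1,\dots,Z_s$ is of the form \eqref{vwh}, over the same $\FF$ with the same $\co$ and $\circ$, and all its exponents lie in $\{1,\co,\ct,\as\}$, so Theorem~\ref{tss} applies to it.

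First I would record the elementary equivalence of solvability. If $X_1,\dots,X_t$ solve \eqref{vwo}, then putting $Y_i:=X_{i'}^{\varepsilon_i}M_i$ and $Z_i:=N_iX_{i''}^{\delta_i}$ makes the second and third equations hold by definition and turns the first into the $i$th equation of \eqref{vwo}; conversely any solution of $\mathcal S$ forces $Y_i=X_{i'}^{\varepsilon_i}M_i$ and $Z_i=N_iX_{i''}^{\delta_i}$, so the first equation becomes $A_iX_{i'}^{\varepsilon_i}M_i-N_iX_{i''}^{\delta_i}B_i=C_i$. Thus $\mathcal S$ is solvable if and only if \eqref{vwo} is.

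The key step is to verify that the criterion \eqref{qq1} of Theorem~\ref{tss} applied to $\mathcal S$ is verbatim the system \eqref{mnc}. Writing $Q_i$ for the transformation attached to $Y_i$ and $R_i$ for the one attached to $Z_i$ (each with exponent $1$, so that $Q_i^{\ll1\rr}=Q_i$ and $R_i^{\ll1\rr}=R_i$ carry no $J$-twist), and $P_j$ for the transformation attached to $X_j$, the three equations above produce, through \eqref{qq1}, exactly
\[
\matt{A_i&0\\0&B_i}Q_i=R_i\matt{A_i&C_i\\0&B_i},\qquad
\matt{I&0\\0&M_i}Q_i=P_{i'}^{\ll\varepsilon_i\rr}\matt{I&0\\0&M_i},\qquad
\matt{N_i&0\\0&I}P_{i''}^{\ll\delta_i\rr}=R_i\matt{N_i&0\\0&I},
\]
the last two because the corresponding $C$-blocks vanish. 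Reading off the coefficients $\mathcal A=I,\ \mathcal B=M_i$ for the second equation and $\mathcal A=N_i,\ \mathcal B=I$ for the third, and keeping the $J$-conjugation of \eqref{qq4} attached precisely to the slots where $\varepsilon_i,\delta_i\in\{\ct,\as\}$, shows that \eqref{qq1} for $\mathcal S$ is exactly \eqref{mnc}. Combining this with the solvability equivalence and Theorem~\ref{tss} then finishes the proof.

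The main obstacle is structural rather than analytic, since Theorem~\ref{tss} supplies all the substantive content. One must choose the auxiliary unknowns and the three replacement equations so that (i) each unknown of $\mathcal S$ is flanked by a coefficient on only one side, so that \eqref{vwh} and hence Theorem~\ref{tss} genuinely apply, and (ii) the induced criterion \eqref{qq1} reassembles into \eqref{mnc} with no extraneous conditions. The care lies in (ii): correctly assigning the exponents $1,\varepsilon_i,\delta_i$ to the left and right slots and confirming that the untwisted matrices $Q_i,R_i$ and the twisted matrices $P_{i'}^{\ll\varepsilon_i\rr},P_{i''}^{\ll\delta_i\rr}$ land in the positions dictated by \eqref{mnc}. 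Once this bookkeeping is in place the theorem is immediate.
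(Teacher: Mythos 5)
Your proposal is correct and follows essentially the same route as the paper: the auxiliary system you build with the unknowns $Y_i,Z_i$ is exactly the system \eqref{sdi} used in the paper's proof, and the subsequent application of Theorem~\ref{tss} with the matrices $Q_i,R_i$ attached to $Y_i,Z_i$ (with trivial exponent) reproduces \eqref{mnc} just as in the paper.
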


\begin{proof}[Proof (assuming that Theorem \ref{tss} holds)]
Define from  \eqref{vwo} the system of $3s$ matrix equations
\begin{equation}\label{sdi}
\left.\begin{split}
A_iY_i-Z_iB_i&=C_i
                         \\
Y_i-X_{i'}^{\varepsilon_i}M_i&=0                           \\
N_iX_{i''}^{\delta_i}-Z_i&=0
 \end{split}\right\},\qquad i=1,\dots,s
\end{equation}
with unknown matrices $X_1,\dots,X_t,\,Y_1,\dots,Y_s,\,Z_1,\dots,Z_s$.
If the system \eqref{vwo} has a solution $(\underline X_1,\dots,\underline X_t)$, then \eqref{sdi} has the solution
$(\underline X_1,\dots, \underline X_t;\underline Y_1,\dots,\underline Y_s;\underline Z_1,\dots,\underline Z_s)$, in which all $\underline Y_i:=\underline X_{i'}^{\varepsilon_i}M_i$ and $ \underline Z_i:=N_i\underline X_{i''}^{\delta_i}$.
Thus, the system \eqref{vwo} has a solution if and only if \eqref{sdi} has a solution.
By Theorem \ref{tss},  the system \eqref{sdi} has a solution if and only if \eqref{mnc} holds for some nonsingular matrices $P_1,\dots,P_t,Q_1,\dots,Q_s,R_1,\dots,R_s$.
\end{proof}

\section{The proof of Theorem \ref{tss}}\label{ss3}

The following lemma proves Theorem
\ref{tss} if all $\varepsilon _i,\delta _i\in\{1,\co\}$.

\begin{lemma}\label{lem}
Let $\FF$ be a skew field that is finite dimensional over its center. Let $a\mapsto a^{\co}$ be an involutory automorphism of $\FF$ $($which can be the identity$)$.
Let
\begin{equation}\label{fbi}
A_iX_{i'}^{\alpha_i }-
X_{i''}^{\beta_i}B_i=C_i,\qquad i=1,\dots,s
\end{equation}
be a system of matrix equations over  $\FF$ with unknown matrices $X_1,\dots,X_t$, in which  all
$\alpha_i,\beta_i\in\{1,\co\}$.
Then the system \eqref{fbi} has a solution if and only if there
exist nonsingular matrices $P_1,\dots,P_t$ such that
\begin{equation}\label{mud}
\mat{A_i&0\\0&B_i}P_{i'}^{\alpha_i}
=P_{i''}^{\beta_i}\mat{A_i&C_i\\0&B_i},\qquad i=1,\dots,s.
\end{equation}
\end{lemma}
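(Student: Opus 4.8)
The plan is to prove the two implications separately, with the reverse (sufficiency) direction carrying essentially all the weight.

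For necessity, suppose $(\underline X_1,\dots,\underline X_t)$ solves \eqref{fbi}. I would simply set $P_j:=\matt{I&\underline X_j\\0&I}$ for each $j$, the block sizes being dictated by the roles of $X_j$ as an $i'$- or $i''$-index in the various equations. Since $a\mapsto a^{\co}$ is an automorphism fixing $0$ and $1$, one has $P_j^{\co}=\matt{I&\underline X_j^{\co}\\0&I}$, so in each of the four combinations of $\alpha_i,\beta_i\in\{1,\co\}$ a direct block multiplication makes the diagonal blocks of \eqref{mud} agree automatically and turns the off-diagonal block into exactly $A_i\underline X_{i'}^{\alpha_i}-\underline X_{i''}^{\beta_i}B_i=C_i$. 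This is a routine computation occupying a few lines.

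For sufficiency I would read \eqref{mud} as saying that, with $\mathcal A_i:=\matt{A_i&0\\0&B_i}$ and $\mathcal C_i:=\matt{A_i&C_i\\0&B_i}$, the nonsingular $P_j$ furnish an isomorphism between the split block system $\{\mathcal A_i\}$ and the extension $\{\mathcal C_i\}$, both block upper triangular with the same diagonal pieces $A_i,B_i$. The goal is to replace the $P_j$ by matrices of unitriangular form $\matt{I&X_j\\0&I}$ still satisfying \eqref{mud}, for then reading off the off-diagonal blocks as in the necessity step yields a solution. I would do this in two steps. The \emph{normalization step} assumes the $P_j$ are already block upper triangular, $P_j=\matt{P_j^{11}&P_j^{12}\\0&P_j^{22}}$: comparing diagonal blocks in \eqref{mud} shows that the block-diagonal parts $D_j:=\matt{P_j^{11}&0\\0&P_j^{22}}$ themselves satisfy \eqref{mud} with every $C_i$ replaced by $0$, i.e.\ $\mathcal A_iD_{i'}^{\alpha_i}=D_{i''}^{\beta_i}\mathcal A_i$. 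Replacing $P_j$ by $D_j^{-1}P_j=\matt{I&(P_j^{11})^{-1}P_j^{12}\\0&I}$ preserves \eqref{mud} — here one uses only that $a\mapsto a^{\co}$ respects products and inverses, so that $(D_{i'}^{-1}P_{i'})^{\alpha_i}=(D_{i'}^{\alpha_i})^{-1}P_{i'}^{\alpha_i}$ and the $D$-relation can be commuted past $\mathcal A_i$ — and then $X_j:=(P_j^{11})^{-1}P_j^{12}$ is the desired solution.

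The real obstacle is the remaining step: producing block upper triangular $P_j$ in the first place, that is, \emph{splitting} the extension $\{\mathcal C_i\}$. I would handle this by passing to the representations attached to the system, which are finite-length modules over a finite-dimensional algebra precisely because $\FF$ is finite dimensional over its center; this is exactly where that hypothesis is used. In this language \eqref{mud} says the module $\mathcal M_C$ built from $\{\mathcal C_i\}$ is isomorphic to the module $\mathcal M_0$ built from $\{\mathcal A_i\}$, where $\mathcal M_0=\mathcal W\oplus\mathcal U$ splits as the submodule $\mathcal W$ (the $B_i$-part) and the quotient $\mathcal U$ (the $A_i$-part), while $\mathcal M_C$ is an extension $0\to\mathcal W\to\mathcal M_C\to\mathcal U\to0$ with the \emph{same} sub and quotient. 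Thus I must show that a short exact sequence whose middle term is isomorphic to the direct sum of its ends must split — a Miyata-type cancellation statement that in the finite-length setting follows from Fitting's lemma and the Krull–Schmidt theorem. A splitting is exactly a unitriangular change of basis $\matt{I&X_j\\0&I}$ carrying $\{\mathcal C_i\}$ to $\{\mathcal A_i\}$, which feeds back into the normalization step. I expect this splitting/cancellation to be the crux of the whole argument; note that, unlike in Theorem~\ref{tss}, no restriction on the characteristic is needed here, precisely because $\alpha_i,\beta_i\in\{1,\co\}$ keep everything governed by a product-preserving automorphism, so no order reversal occurs and the $J$-twist of \eqref{qq4} never enters.
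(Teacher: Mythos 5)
Your proposal is correct in outline, and its necessity direction and ``normalization'' step coincide with the paper's (the paper also takes $P_j=\matt{I&\underline X_j\\0&I}$ for necessity and, at the end, reads the solution off the $(1,2)$ blocks of a unitriangular tuple satisfying \eqref{mud}). Where you genuinely diverge is in how that unitriangular tuple is produced. The paper never passes to modules: it works directly with the finite-dimensional $\GG$-vector space of \emph{all} $t$-tuples of (not necessarily invertible) block matrices, where $\GG$ is the fixed subfield of the center under $a\mapsto a^{\co}$; it compares the space $\mathcal U_1$ of intertwiners into $\matt{A_i&C_i\\0&B_i}$ with the space $\mathcal U_2$ of intertwiners into $\matt{A_i&0\\0&B_i}$, proves $\dim_{\GG}\mathcal U_1=\dim_{\GG}\mathcal U_2$ by right-multiplying by the given nonsingular $P_j$, and uses rank--nullity for the projections onto the first-column blocks to conclude that the identity tuple (obviously in the image for $\mathcal U_2$) lifts to a tuple $\matt{I&U_{j2}\\0&U_{j4}}$ in $\mathcal U_1$. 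This elementary Flanders--Wimmer count is, once unwound, exactly the computation $\dim\mathrm{Hom}(\mathcal U,\mathcal M_C)=\dim\mathrm{Hom}(\mathcal U,\mathcal W)+\dim\mathrm{Hom}(\mathcal U,\mathcal U)$ that establishes your Miyata-type splitting. Your route makes the conceptual content (an extension isomorphic to the direct sum of its ends must split) explicit and reusable; the paper's route stays self-contained and avoids having to realize systems involving the semilinear operation $X\mapsto X^{\co}$ as finite-length modules over a finite-dimensional $\GG$-algebra --- a construction you assert but do not carry out, and which is where the hypothesis that $\FF$ is finite dimensional over its center enters in both arguments.

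One caveat on the step you delegate: the splitting does \emph{not} follow from Krull--Schmidt plus Fitting's lemma as you suggest. Those results govern direct-sum decompositions and endomorphism rings of indecomposables; they do not tell you that a \emph{given} submodule with complemented isomorphism type is itself a direct summand. The correct justification in the finite-length setting is the Hom-dimension count: apply $\mathrm{Hom}(\mathcal U,-)$ to $0\to\mathcal W\to\mathcal M_C\to\mathcal U\to 0$, use $\mathcal M_C\cong\mathcal W\oplus\mathcal U$ to compare $\GG$-dimensions, and conclude that $\mathrm{id}_{\mathcal U}$ lifts to a section. Since that count is precisely the paper's Facts 1--3 together with \eqref{nhy}, your argument is not wrong, but its crux is exactly the part left uncited and unproved; filling it in reproduces the paper's proof in module-theoretic dress.
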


\begin{proof}
$\Longrightarrow$.
If $(\underline X_1,\dots,\underline X_t)$ is a solution of \eqref{fbi}, then \eqref{mud} holds for
\begin{equation}\label{szo}
P_1=\mat{I&\underline X_1\\0&I},\ \dots,\ P_t=\mat{I&\underline X_t\\0&I}.
\end{equation}

$\Longleftarrow$. Suppose there are nonsingular matrices $P_1,\dots,P_t$ of sizes ${n_1\times n_1},\dots,n_t\times n_t$ satisfying \eqref{mud}.  Then
\begin{equation}\label{amu}
(P_1,\dots,P_t)\in \mathcal U:=\FF^{n_1\times n_1}\oplus\dots \oplus \FF^{n_t\times n_t}.
\end{equation}

Denote by $C(\FF)$ the center of $\FF$ (which coincides with $\FF$ if $\FF$ is a field). For $c\in C(\FF)$ and any $a\in\FF$, $a^{\co} c^{\co}=(ac)^{\co}=(ca)^{\co}=c^{\co}a^{\co}$, and so $c^{\co}\in C(\FF)$. Hence $c\mapsto c^{\co}$ is an automorphism of $C(\FF)$ of order 1 or 2. By \cite[Chapter VI, Theorem 1.8]{lang}, the index of the subfield $\GG:=\{c\in C(\FF)\,|\,c= c^{\co}\}$ in $C(\FF)$ is 1 or 2. Since $\FF$ is finite dimensional over its center, $\FF$ is also finite dimensional over $\GG$.

Thus, the set $\mathcal U$ in \eqref{amu} is a finite dimensional vector space over $\GG$.
Define its subspaces
\begin{align*}
\mathcal U_1&:=\left\{(U_1,\dots,U_t)\in \mathcal U\,|
\matt{A_i&0\\0&B_i}U_{i'}^{\alpha _i}
=U_{i''}^{\beta_i}\matt{A_i&C_i\\0&B_i}\!,\ i=1,\dots,s\right\},
             \\
\mathcal U_2&:=\left\{(U_1,\dots,U_t)\in \mathcal U\,|
\matt{A_i&0\\0&B_i}U_{i'}^{\alpha _i}
=U_{i''}^{\beta_i}\matt{A_i&0\\0&B_i}\!, \ i=1,\dots,s\right\}.
\end{align*}
Let the matrices of every
\[
U=\left(\mat{U_{11}&U_{12}\\U_{13}&U_{14}},
\dots,\ \mat{U_{t1}&U_{t2}\\U_{t3}&U_{t4}}\right)\in \mathcal U
\]
be partitioned into 4 blocks
such that each $U_{i2}$ has the same size as $X_i$ (compare with \eqref{szo}).
Define the $\GG$-linear mappings $\pi _k:\mathcal U_k\to \mathcal U$ ($k=1,2$) as follows:
\[
\pi_k: \left(\mat{U_{11}&U_{12}\\U_{13}&U_{14}},
\dots, \mat{U_{t1}&U_{t2}\\U_{t3}&U_{t4}}\right)\mapsto
\left(\mat{U_{11}&0\\U_{13}&0},
\dots, \mat{U_{t1}&0\\U_{t3}&0}\right).
\]
Then
\begin{equation}\label{nhy}
\dim_{\GG}\im\pi_k+\dim_{\GG}\Ker\pi_k
=\dim_{\GG}\mathcal U_k,\qquad k=1,2.
\end{equation}

\begin{description}
  \item[\it Fact 1:
$\dim_{\GG}\mathcal U_1=\dim_{\GG}\mathcal U_2.$] \
Indeed, for the $t$-tuple \eqref{amu} from $\mathcal U_1$ and for every $(U_1,\dots,U_t)\in\mathcal U_2$, we have
\[
\mat{A_i&0\\0&B_i}(U_{i'}P_{i'})^{\alpha _i}=
U_{i''}^{\beta _i} \mat{A_i&0\\0&B_i}P_{i'}^{\alpha _i}=
(U_{i''}P_{i''})^{\beta _i} \mat{A_i&C_i\\0&B_i}.
\]
Hence
$
(U_1,\dots,U_t)\mapsto
 (U_1P_1,\dots,U_tP_t)
$
is a $\GG$-linear bijection $\mathcal U_2\to\mathcal U_1$, which proves Fact 1.

  \item[\it Fact 2: $\Ker\pi_1=\Ker\pi_2.$] \ A $t$-tuple $U\in\mathcal U$ belongs to $\Ker\pi_1$ if and only if
\[U=\left(\mat{0&U_{12}\\0&U_{14}},
\dots, \mat{0&U_{t2}\\0&U_{t4}}\right) \in\mathcal U_1
\]
if and only if
\[U=\left(\mat{0&U_{12}\\0&U_{14}},
\dots, \mat{0&U_{t2}\\0&U_{t4}}\right) \in\mathcal U_2
\]
if and only if $U\in\Ker\pi_2$.

  \item[\it Fact 3:
$\im\pi_1\subset\im \pi_2.$] \ For each
\[U=\left(\mat{U_{11}&0\\U_{13}&0},\dots, \mat{U_{t1}&0\\U_{t3}&0}\right)\in \im\pi_1,\]
there exist $U_{12},U_{14},\dots,U_{t2},U_{t4}$ such that \[\left(\mat{U_{11}&U_{12}\\U_{13}&U_{14}},
\dots, \mat{U_{t1}&U_{t2}\\U_{t3}&U_{t4}}\right)\in \mathcal U_1,\] which means that
\[
\mat{A_i&0\\0&B_i}\mat{U_{i'1}^{\alpha _i}&U_{i'2}^{\alpha _i}\\ U_{i'3}^{\alpha _i}&U_{i'4}^{\alpha _i}}
=\mat{U_{i''1}^{\beta_i}
&U_{i''2}^{\beta_i}\\ U_{i''3}^{\beta_i}&U_{i''4}^{\beta_i}}
\mat{A_i&C_i\\0&B_i},\qquad i=1,\dots,s.
\]
Then
\[
\mat{A_i&0\\0&B_i}\mat{U_{i'1}^{\alpha _i}&0\\ U_{i'3}^{\alpha _i}&0}
=\mat{U_{i''1}^{\beta_i}
&0\\ U_{i''3}^{\beta_i}&0}
\mat{A_i&0\\0&B_i},\qquad i=1,\dots,s,
\]
and so $U\in \im\pi_2,$ which proves Fact 3.
\end{description}

By \eqref{nhy} and Facts 1--3, $\im\pi_1=\im\pi_2$. Since $(I,\dots,I)\in\mathcal U_2$,
$\left(\matt{I&0\\0&0},\dots,\matt{I&0\\0&0}\right) \in\im\pi_2=\im\pi_1$. Hence there are $U_{12},U_{14},\dots,U_{t2},U_{t4}$ such that \[\left(\mat{I&U_{12}\\0&U_{14}},
\dots, \mat{I&U_{t2}\\0&U_{t4}}\right)\in \mathcal U_1,\]
which means that
\begin{equation}\label{frk}
\mat{A_i&0\\0&B_i}\mat{I&U_{i'2}^{\alpha _i}\\ 0&U_{i'4}^{\alpha _i}}
=\mat{I&U_{i''2}^{\beta_i}
\\0& U_{i''4}^{\beta_i}}
\mat{A_i&C_i\\0&B_i},\qquad i=1,\dots,s.
\end{equation}
Equating the $(1,2)$ blocks in \eqref{frk}, we get
$
A_iU_{i'2}^{\alpha _i}=C_i+U_{i''2}^{\beta_i}B_i.
$ Thus, $(U_{12},\dots,U_{t2})$ is a solution of the system \eqref{fbi}.
\end{proof}

\begin{proof}[Proof of Theorem \ref{tss}]
$\Longrightarrow$.
If $(\underline X_1,\dots,\underline X_t)$ is a solution of \eqref{vwh}, then the equalities \eqref{qq1} hold for $P_1,\dots, P_t$ defined in \eqref{szo}.
\medskip

$\Longleftarrow$.
Suppose there are nonsingular matrices $P_1,\dots,P_t$ satisfying \eqref{qq1}. We consider the set  $\{1,\co,\ct,\as\}$
as the abelian group with multiplication
\[
\begin{tabular}{c|cccc}
   &1&$\co$&$\ct$&$\as$
\\\hline   1&1&$\co$&$\ct$&$\as$
\\$\co$&$\co$&$1$&$\as$&$\ct$
\\ $\ct$&$\ct$&$\as$&1&$\co$
\\ $\as$&$\as$&$\ct$&$\co$&$1$
\end{tabular}
\]
that corresponds to the compositions of the matrix mappings $A\mapsto A^{\varepsilon }$, $\varepsilon \in\{1,\co,\ct,\as\}$.

Represent \eqref{qq1} in the form
\begin{equation}\label{qq6}
\mat{A_i&0\\0&B_i}\left(P_{i'}^{\ll\lambda_i\rr}\right)^{\alpha _i}
=\left(P_{i''}^{\ll\mu_i\rr}\right)^{\beta _i}\mat{A_i&C_i\\0&B_i},\qquad i=1,\dots,s,
\end{equation}
in which $\alpha _i,\beta _i\in\{1,\co\}$ and $\lambda _i,\mu _i\in\{1,\ct\}$ are such that $\alpha _i\lambda _i=\varepsilon _i$ and $\beta _i\mu _i=\delta _i$.
Applying $\ct$ to \eqref{qq6} and multiplying each factor by $J=\matt{0&I\\-I&0}$ on the left and by $J^{ -1}=\matt{0&-I\\I&0}$ on the right, we get
\begin{equation}\label{dfo}
J\left(P_{i'}^{\ll\alpha _i\lambda_i\rr}\right)^{\!\ct}\!\!J^{-1}J
\mat{A_i&0\\0&B_i}^{\ct}\!\!J^{-1}
=J\mat{A_i&C_i\\0&B_i}^{\ct}\!\!J^{-1}J
\left(P_{i''}^{\ll\beta _i\mu_i\rr}\right)^{\ct}\!\!J^{-1}.
\end{equation}
Using
\begin{align*}
J\left(\left(P_{i'}^{\ll\alpha _i\lambda_i\rr}\right)^{\!\ct}
\right)^{\!-1}\!\!J^{-1}
           &=
\left(P_{i'}^{\ll\alpha _i\lambda_i\rr}\right)^{\ll\ct\rr}
           =
\left(
P_{i'}^{\ll\lambda_i\ct\rr}\right)^{\alpha _i},
\\
J\left(\left(P_{i''}^{\ll\beta _i\mu_i\rr}\right)^{\!\ct}
\right)^{\!-1}\!\!J^{-1}
           &=
\left(P_{i''}^{\ll\beta _i\mu_i\rr}\right)^{\ll\ct\rr}
           =
\left(
P_{i''}^{\ll\mu_i\ct\rr}\right)^{\beta_i}
\end{align*}
and \eqref{qq4}, we rewrite \eqref{dfo} as follows:
\begin{equation}\label{etc}
\mat{B_i^{\ct}&0\\0&A_i^{\ct}}
\left(
P_{i''}^{\ll\mu_i\ct\rr}\right)^{\beta _i}
=
\left(
P_{i'}^{\ll\lambda_i\ct\rr}\right)^{\alpha _i}
\mat{B_i^{\ct}&-C_i^{\ct}\\0&A_i^{\ct}},\quad i=1,\dots,s.
\end{equation}

The equalities \eqref{qq6} and \eqref{etc} and Lemma \ref{lem} ensure the solvability of the system formed by $2s$ matrix equations
\begin{equation}\label{sma}
A_i Y_{\lambda _i,i'}^{\alpha_i}-
 Y_{\mu _i,i''}^{\beta_i}B_i=C_i,\qquad
B_i^{\ct}Y_{\mu _i\ct,i''}^{\beta_i}-
Y_{\lambda _i\ct,i'}^{\alpha_i}A_i^{\ct}=-C_i^{\ct}
\end{equation}
($i=1,\dots,s$) with unknown matrices $Y_{1,1},\dots,Y_{1,t},
Y_{\ct,1},\dots,Y_{\ct,t}$. Let  $\underline Y_{1,1},\dots,\underline Y_{1,t},$
$\underline Y_{\ct,1},\dots,\underline Y_{\ct,t}$ be its solution. Substituting these matrices to \eqref{sma} and applying $\ct$ to the right equalities, we get
\begin{equation*}\label{sm'}
A_i \underline Y_{\lambda _i,i'}^{\alpha_i}-
\underline  Y_{\mu _i,i''}^{\beta_i}B_i=C_i,\qquad
A_i\left(\underline Y_{\lambda _i\ct,i'}^{\alpha_i}\right)^{\!\ct}-
\left(\underline Y_{\mu _i\ct,i''}^{\beta_i}\right)^{\!\ct}\!B_i
=C_i.
\end{equation*}
Adding the left and right equalities, we obtain
\begin{equation}\label{cq;}
A_i\left(\underline Y_{\lambda _i,i'}
+\underline Y_{\lambda _i\ct,i'}^{\ct}
\right)^{\alpha_i}-\left(
\underline Y_{\mu _i,i''}+\underline Y_{\mu _i\ct,i''}^{\ct}\right)^{\beta_i}\!B_i=2C_i,\qquad i=1,\dots,s.
\end{equation}
Write $\underline X_i:=(\underline Y_{1,i}+
\underline Y_{\ct,i}^{\ct})/2$ for $i=1,\dots,t$.
Then $\underline X_i^{\ct}=(\underline Y_{\ct,i}+
\underline Y_{1,i}^{\ct})/2$. By \eqref{cq;},
\[
A_i\underline X_{i'}^{\alpha_i\lambda_i}-
\underline X_{i''}^{\beta_i\mu_i}B_i=C_i,\qquad i=1,\dots,s.
\]
Therefore, $\underline X_1,\dots,\underline X_t$ is a solution of the system \eqref{vwh}.
\end{proof}

\section*{Acknowledgements}
A. Dmytryshyn  was supported by the Swedish Research Council (VR) grant E0485301, and by eSSENCE, a strategic collaborative e-Science programme funded by the Swedish Research Council. V.~Futorny was supported by  CNPq grant 301320/2013-6 and FAPESP grant 2014/09310-5.
V.V.~Sergeichuk was supported by FAPESP grant 2015/05864-9.

\end{document}